\newcommand{\bburl}[1]{\textcolor{blue}{\url{#1}}}
\newcommand{\monthyear}[1]{%
  \def\@monthyear{\uppercase{#1}}}
\newcommand{\volnumber}[1]{%
  \def\@volnumber{\uppercase{#1}}}
\def\ps@plain{\ps@empty
  \def\@oddfoot{\@monthyear \hfil \thepage}%
  \def\@evenfoot{\thepage \hfil \@volnumber}}
\def\ps@firstpage{\ps@plain}
\def\ps@headings{\ps@empty
  \def\@evenhead{%
    \setTrue{runhead}%
    \def\thanks{\protect\thanks@warning}%
    \uppercase{\ }\hfil}%
  \def\@oddhead{%
    \setTrue{runhead}%
    \def\thanks{\protect\thanks@warning}%
    \hfill\uppercase{Partial Sums of the Fibonacci Sequence}}%
  \let\@mkboth\markboth
  \def\@evenfoot{%
    \thepage \hfil \@volnumber}%
  \def\@oddfoot{%
    \@monthyear \hfil \thepage}%
  }%
\theoremstyle{plain}
\numberwithin{equation}{section}
\newtheorem{thm}{Theorem}[section]
\newcommand{\ignore}[1]{}
\newcommand\be{\begin{eqnarray}}
\newcommand\ee{\end{eqnarray}}
\newcommand\bea{\begin{eqnarray}}
\newcommand\eea{\end{eqnarray}}
\newcommand\ben{\begin{enumerate}}
\newcommand\een{\end{enumerate}}
\newtheorem{cor}[thm]{Corollary}
\newtheorem{lem}[thm]{Lemma}
\newtheorem{rek}[thm]{Remark}
\begin{document}

\monthyear{}
\volnumber{Volume, Number}
\setcounter{page}{1}
\title{Partial Sums of the Fibonacci Sequence}

\author{H\`ung Vi\d{\^e}t Chu}

\address{Department of Mathematics, University of Illinois at Urbana-Champaign, Urbana, IL 61820} \email{hungchu2@illinois.edu}

\date{\today}

\begin{abstract}
Let $(F_n)_{n\ge 1}$ be the Fibonacci sequence. Define $P(F_n): = (\sum_{i=1}^n F_i)_{n\ge 1}$; that is, the function $P$ gives the sequence of partial sums of $(F_n)$. In this paper, we first give an identity involving $P^k(F_n)$, which is the resulting sequence from applying $P$ to $(F_n)$ $k$ times. Second, we provide a combinatorial interpretation of the numbers in $P^k(F_n)$. 
\end{abstract}

\thanks{The author is thankful for the anonymous referee's comments that improved the clarity of this paper.}

\maketitle
\section{Introduction}
Let $(F_n)_{n\ge 1}$ be the Fibonacci sequence with $F_1 = 1, F_2 = 1$, and $F_{n+2} = F_{n+1} + F_{n}$ for $n\ge 1$. Define $P(F_n): = (\sum_{i=1}^n F_i)_{n\ge 1}$; that is, the function $P$ gives the sequence of partial sums of $(F_n)$. Generally, we can apply the function $P$ to $(F_n)$ $k$ times to have the sequence $P^k(F_n)$. For ease of notation, let $a_k(n)$ denote the $n$th number in the sequence $P^k(F_n)$. Table 1 summarizes some initial numbers $a_k(n)$ for different values of $k$ and $n$.\newline

\begin{tabular}{c|cccccccccccc}
$k\backslash n$ & $1$ & $2$ & $3$ & $4$ & $5$ & $6$ & $7$ & $8$ & $9$ & $10$ & $11$ & $12$\\
\hline
0 & 1 & 1 & 2 & 3        & $5$ &    $8$ &   $13$ & $21$ & $34$ & $55$ & $89$ & $144$\\
1 & 1 & 2 & $4$ & $7$    & $12$ &   $20$&   $33$ & $54$ & $88$ & $143$ & $232$ & $376$\\
2 & 1 & 3 & $7$ & $14$   & $26$ &   $46$&   $79$ & $133$ & $221$ & $364$ & $596$ & $972$\\
3 & 1 & 4 & $11$ & 25    & $51$ &   $97$&   $176$ & $309$ & $530$ & $894$ & $1490$ & $2462$\\
4 & 1 & 5 & $16$ & $41$  & $92$ &   $189$&  $365$ & $674$ & $1204$ & $2098$ & $3588$ & $6050$\\
5 & 1 & 6 & $22$ & $63$  & $155$ &  $344$&  $709$ & $1383$ & $2587$ & $4685$ & $8273$ & $14323$\\
\end{tabular}

\begin{center}
Table 1. Initial numbers $a_k(n)$ for different values of $k$ and $n$.
\end{center}
We compute a few values of $a_k(n)$ to illustrate how Table 1 is obtained. For $k = 0$, we do not apply the partial sum operator to the first row, so we have the Fibonacci sequence. For $k = 1$, we apply the partial sum operator once to the first row to have the second row. Thus, 
\begin{align*}
    &a_2(1) \ =\  a_1(1)\ =\ 1,\\
    &a_2(2) \ =\ a_1(1) + a_1(2) \ =\ 1 + 1 = 2,\\
    &a_2(3) \ =\  a_1(1) + a_1(2) + a_1(3) \ =\ 1 + 1 + 2 = 4,
\end{align*}
and so on. For $k = 2$, we apply the partial sum operator twice to the first row; equivalently, we apply the partial sum operator once to the second row. Repeat the procedure to fill in the table. 

We now consider the first and second rows. Observe that each number in the second row is 1 less than a Fibonacci number; in particular, $a_0(n+2) - 1 = a_1(n)$ for all $n\ge 1$. A natural question is about the relationship between any two consecutive rows of Table 1. Still looking at pairs of numbers whose columns differ by 2, we compute several differences 
\begin{align*}
    &a_1(11) - a_2(9) \ =\ 11,\\
    &a_2(10) - a_3(8)  \ =\ 55, \\
    &a_3(8) - a_4(6) \ =\ 309-189 \ =\ 120,\\ 
    &a_4(12) - a_5(10) \ =\ 6050-4685 \ =\ 1365.
\end{align*}
If we check these numbers in the Pascal triangle, we see a pattern. Call the outmost left edge of the Pascal triangle the \textit{first line} (consisting of $1$'s); move to the right 1 unit to have the \textit{second line} (consisting of $1, 2, 3, 4, \ldots$), and so on. Then $11$ lies on the second line; $55$ lies on the third line; $120$ lies on the fourth line; $1365$ lies on the fifth line. This fact might lead to the suspicion that $a_{k-1}(n+2) - a_{k}(n)$ lies on the $k$th line. The following theorem shows that our suspicion is indeed well-founded. 
\begin{thm}\label{m1}
For $k\ge 1$, we have
$$a_k(n)\ =\ a_{k-1}(n+2) - \binom{n+k}{k-1}.$$
By definition, the identity is equivalent to $$\sum_{m=1}^n a_{k-1}(m) \ =\ a_{k-1}(n+2) - \binom{n+k}{k-1}.$$
\end{thm}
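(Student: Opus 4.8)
The plan is to recast the theorem in the equivalent form $a_k(n) = a_{k-1}(n+2) - \binom{n+k}{k-1}$ and prove it by induction on $k$, treating all $n\ge 1$ simultaneously at each stage. The base case $k=1$ is the classical Fibonacci partial‑sum identity $\sum_{i=1}^{n} F_i = F_{n+2}-1$, already recorded in the introduction as $a_1(n) = a_0(n+2)-1$; since $\binom{n+1}{0}=1$ this is exactly the $k=1$ instance, and it is itself a one‑line induction on $n$ from $F_{n+2}=F_{n+1}+F_n$.

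For the inductive step I would assume $a_k(m) = a_{k-1}(m+2) - \binom{m+k}{k-1}$ for every $m\ge 1$ and compute $a_{k+1}(n)$ straight from the definition $a_{k+1}(n) = \sum_{m=1}^{n} a_k(m)$. Substituting the hypothesis splits this as
\[
a_{k+1}(n) \;=\; \sum_{m=1}^{n} a_{k-1}(m+2) \;-\; \sum_{m=1}^{n}\binom{m+k}{k-1}.
\]
The first sum equals $\sum_{j=3}^{n+2} a_{k-1}(j) = a_k(n+2) - a_{k-1}(1) - a_{k-1}(2)$, because $a_k$ is by definition the partial‑sum sequence of $a_{k-1}$. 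The second sum is evaluated by the hockey‑stick identity $\sum_{i=0}^{N}\binom{r+i}{r}=\binom{r+N+1}{r+1}$ with $r=k-1$, giving $\sum_{m=1}^{n}\binom{m+k}{k-1} = \binom{n+k+1}{k} - \binom{k-1}{k-1} - \binom{k}{k-1}$.

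What remains is the elementary observation that $a_j(1)=1$ and $a_j(2)=j+1$ for all $j\ge 0$ (immediate by induction on $j$: $P$ leaves the first term unchanged, and since that term is always $1$ it sends a second term $c_2$ to $c_1+c_2 = 1+c_2$). Hence $a_{k-1}(1)+a_{k-1}(2) = 1+k$, while $\binom{k-1}{k-1}+\binom{k}{k-1} = 1+k$ as well, so the two ``boundary corrections'' cancel and we are left with precisely $a_{k+1}(n) = a_k(n+2) - \binom{n+k+1}{k}$, the identity at level $k+1$.

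I do not expect a genuine obstacle: the only delicate point is the bookkeeping of index shifts — making sure the constants produced by the telescoping sum ($a_{k-1}(1)+a_{k-1}(2)$) match exactly the two binomial coefficients dropped from the hockey‑stick sum, so that the cancellation is clean. Conceptually the whole argument reduces to Pascal's rule (in the guise of the hockey‑stick identity) together with the classical Fibonacci partial‑sum formula. If one wished to avoid induction on $k$, an alternative is to first establish the closed form $a_k(n) = \sum_{m=1}^{n}\binom{n-m+k-1}{k-1}F_m$ for the $k$‑fold partial sum and then deduce the theorem by a single hockey‑stick manipulation; but the direct induction above seems shortest.
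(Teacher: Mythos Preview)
Your proof is correct. Both you and the paper proceed by induction on $k$ with the same base case $k=1$, but the inductive steps differ. The paper carries out a \emph{nested} induction on $n$: it needs a separate lemma computing $a_k(3)=\binom{k+2}{k}+1$ for the inner base case, and then peels off one term at a time, invoking both the outer hypothesis (at level $k-1$) and the inner hypothesis (at $n-1$), combining them via a single application of Pascal's rule and the relation $a_\ell(j+3)=a_{\ell-1}(j+3)+a_\ell(j+2)$. You instead dispose of all $n$ at once by summing the level-$k$ hypothesis directly and evaluating the resulting binomial sum with the hockey-stick identity; the role of the paper's lemma on $a_k(3)$ is played by the simpler observations $a_j(1)=1$ and $a_j(2)=j+1$. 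Your route is a bit slicker---no inner induction and no auxiliary lemma---at the cost of quoting hockey-stick rather than just Pascal; the paper's version is more self-contained but slightly longer.
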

\begin{rek}\normalfont
If we let $k = 1$, then Theorem \ref{m1} gives the identity $\sum_{m=1}^n a_0(m)  = a_{0}(n+2) - 1$, which is the well known identity 
$\sum_{m=1}^n F_m  = F_{n+2} - 1$. 
\end{rek}

Our next result gives a combinatorial interpretation of $a_k(n)$. For $n\in\mathbb{N}$, define
$$s_k(n) := \#\{S\subset \{1, 2, \ldots, n\}\,:\, |S| \ge k \mbox{ and } \min S\ge |S|\}.$$
Sets with $\min S\ge |S|$ are called Schreier sets. Schreier \cite{S} used these sets to solve a problem in Banach space theory and they were also independently discovered in combinatorics and are connected to Ramsey-type theorems for subsets of $\mathbb{N}$. Table 2 summarizes some initial numbers $s_k(n)$ for different values of $k$ and $n$.\newline 

\begin{tabular}{c|cccccccccccc}
$k\backslash n$ & $1$ & $2$ & $3$ & $4$ & $5$ & $6$ & $7$ & $8$ & $9$ & $10$ & $11$ & $12$\\
\hline
0 & 2 & 3 & 5 & 8 & 13        & $21$ &    $34$ &   $55$ & $89$ & $144$ & $233$ & $377$ \\
1 & 1 & 2 & $4$ & $7$    & $12$ &   $20$&   $33$ & $54$ & $88$ & $143$ & $232$ & $376$\\
2 & 0 & 0 & 1 & 3 & $7$ & $14$   & $26$ &   $46$&   $79$ & $133$ & $221$ & $364$ \\
3 & 0 & 0 & 0 & 0 & 1 & 4 & $11$ & 25    & $51$ &   $97$&   $176$ & $309$ \\
4 & 0 & 0 & 0 & 0 & 0 & 0 & 1 & 5 & $16$ & $41$  & $92$ &   $189$ \\
5 & 0 & 0 & 0 & 0 & 0 & 0 & 0 & 0 & 1 & 6 & $22$ & $63$  \\
\end{tabular}

\begin{center}
Table 2. Initial numbers $s_k(n)$ for different values of $k$ and $n$.
\end{center}
We obtain Table 2 with the help of a simple program. Comparing Tables 1 and 2, we see that $s_k(n)$ is a shift of $a_k(n)$. The following theorem describes this relationship. 
\begin{thm}\label{m2}
For $k\ge 0$, we have 
$$s_k(n) \ =\ a_k(n-2(k-1)).$$
\end{thm}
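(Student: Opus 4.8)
The plan is to prove Theorem~\ref{m2} by induction on $k$, using Theorem~\ref{m1} to power the inductive step and a ``count by size'' formula for $s_k(n)$ as the combinatorial input. The key observation is that a subset $S\subseteq\{1,\ldots,n\}$ with $|S|=\ell$ and $\min S\ge\ell$ is exactly an $\ell$-element subset of $\{\ell,\ell+1,\ldots,n\}$, and there are $\binom{n-\ell+1}{\ell}$ of those; summing over admissible sizes gives
\begin{equation*}
s_k(n)\ =\ \sum_{\ell\ge k}\binom{n+1-\ell}{\ell},
\end{equation*}
with the convention $\binom{a}{b}=0$ when $a<b$ or $a<0$. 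Two consequences will be used: isolating the $\ell=k$ term gives $s_k(n)-s_{k+1}(n)=\binom{n+1-k}{k}$ for all $k\ge 0$; and, since the largest element of a Schreier set of size $\ell$ is at least $2\ell-1$, we have $s_k(n)=0$ whenever $n\le 2k-2$. I also adopt the convention $a_k(m)=0$ for $m\le 0$, consistent with $a_k$ being the sequence of partial sums of $a_{k-1}$.

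For the base case $k=0$, the displayed formula reads $s_0(n)=\sum_{\ell\ge0}\binom{n+1-\ell}{\ell}$, the classical sum along a shallow diagonal of Pascal's triangle, which equals $F_{n+2}$ (equivalently, $s_0(n)$ counts all Schreier subsets of $\{1,\ldots,n\}$, a number well known to be $F_{n+2}$). Since $a_0(n+2)=F_{n+2}$ and $n+2=n-2(0-1)$, this is exactly $s_0(n)=a_0(n-2(0-1))$.

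For the inductive step, suppose $s_k(n)=a_k(n-2(k-1))=a_k(n-2k+2)$ for all $n\ge 1$; I want $s_{k+1}(n)=a_{k+1}(n-2k)$. For $1\le n\le 2k$ both sides vanish (by the size bound and by the convention, respectively), so assume $n\ge 2k+1$, whence $n-2k\ge 1$ and Theorem~\ref{m1} (with parameter $k+1\ge1$ and argument $n-2k$) gives
\begin{equation*}
a_{k+1}(n-2k)\ =\ a_k\bigl((n-2k)+2\bigr)-\binom{(n-2k)+(k+1)}{k}\ =\ a_k(n-2k+2)-\binom{n+1-k}{k}.
\end{equation*}
By the inductive hypothesis $a_k(n-2k+2)=s_k(n)$, and by the size formula $\binom{n+1-k}{k}=s_k(n)-s_{k+1}(n)$; substituting yields $a_{k+1}(n-2k)=s_k(n)-\bigl(s_k(n)-s_{k+1}(n)\bigr)=s_{k+1}(n)$, i.e.\ $s_{k+1}(n)=a_{k+1}\bigl(n-2((k+1)-1)\bigr)$. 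This closes the induction.

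The obstacle here is bookkeeping rather than conceptual: one must check the count-by-size formula carefully (the crux being that $|S|=\ell$ together with $\min S\ge\ell$ holds precisely when $S$ is an $\ell$-subset of $\{\ell,\ldots,n\}$), align the binomial indices produced by Theorem~\ref{m1} with those in the size formula, and dispatch the small-$n$ range where the argument of $a_k$ is non-positive. If one wishes to avoid Theorem~\ref{m1}, an alternative is to prove $s_k(n)=s_k(n-1)+s_{k-1}(n-2)$ for $k\ge1$ and $n\ge2$ by splitting on whether $n\in S$ and applying the shift $T\mapsto\{t-1:t\in T\}$ to the sets containing $n$, note $a_k(m)=a_k(m-1)+a_{k-1}(m)$ directly from the definition, and run a double induction on $(k,n)$; this succeeds but needs the same boundary care and more case-work, so I would present the argument above.
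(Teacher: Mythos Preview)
Your proof is correct and follows essentially the same route as the paper: induction on $k$, base case $s_0(n)=F_{n+2}$, and an inductive step that combines Theorem~\ref{m1} with the identity $s_k(n)-s_{k+1}(n)=\binom{n+1-k}{k}$ (which the paper states as a separate lemma/corollary), together with the same boundary check that both sides vanish for $n\le 2k$. The only cosmetic difference is that you package the combinatorial input as the single closed form $s_k(n)=\sum_{\ell\ge k}\binom{n+1-\ell}{\ell}$, from which both the base case and the difference formula drop out, whereas the paper proves these pieces separately.
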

\begin{rek}
\normalfont
In Theorem \ref{m2}, if $n-(2k-1)\le 0$, then $a_k(n-2(k-1)) = 0$ by convention. 
\end{rek}
\section{Proofs}
We use induction repeatedly and several straightforward observations to prove Theorems \ref{m1} and \ref{m2}. The following lemma is useful.
\begin{lem}\label{fa3}
For $k\ge 0$, we have $a_k(3) = \binom{k+2}{k} + 1$. 
\end{lem}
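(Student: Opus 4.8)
The plan is to prove the lemma by a short induction on $k$, after first recording two even simpler facts about the first two columns of Table 1. Namely, I would first show that $a_j(1) = 1$ and $a_j(2) = j+1$ for every $j \ge 0$. The first is immediate because applying $P$ never changes the first entry: $a_j(1) = \sum_{i=1}^{1} a_{j-1}(i) = a_{j-1}(1)$, so $a_j(1) = a_0(1) = F_1 = 1$. For the second, the definition gives $a_j(2) = a_{j-1}(1) + a_{j-1}(2) = 1 + a_{j-1}(2)$, and since $a_0(2) = F_2 = 1$, an easy induction on $j$ yields $a_j(2) = j+1$.

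With these in hand I would induct on $k$. The base case $k=0$ reads $a_0(3) = F_3 = 2 = \binom{2}{0} + 1$, which holds. For the inductive step, assume $a_{k-1}(3) = \binom{k+1}{k-1} + 1$. Unwinding one application of $P$ and using the two auxiliary facts together with the inductive hypothesis,
\[
a_k(3) \;=\; a_{k-1}(1) + a_{k-1}(2) + a_{k-1}(3) \;=\; 1 + k + \binom{k+1}{k-1} + 1 .
\]
It then remains to verify the purely numerical identity $1 + k + \binom{k+1}{k-1} = \binom{k+2}{k}$. Since $\binom{k+1}{k-1} = \binom{k+1}{2}$ and $1+k = \binom{k+1}{1}$, this is exactly Pascal's rule $\binom{k+2}{2} = \binom{k+1}{1} + \binom{k+1}{2}$, and $\binom{k+2}{2} = \binom{k+2}{k}$. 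Hence $a_k(3) = \binom{k+2}{k} + 1$, completing the induction.

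There is no real obstacle here: the statement is essentially a closed-form evaluation of a triangular-type recursion, and the only point requiring care is lining up the binomial indices correctly (and keeping in mind $\binom{n}{m} = \binom{n}{n-m}$, so that $\binom{k+2}{k}$ is the quadratic $\tfrac{(k+1)(k+2)}{2}$). Alternatively, one could bypass the induction on $k$ by solving the recurrence $a_k(3) = a_{k-1}(3) + (k+1)$ directly, obtaining $a_k(3) = 2 + \sum_{j=1}^{k}(j+1) = \tfrac{k^2+3k+4}{2}$, and then checking this equals $\binom{k+2}{2}+1$. I would present the inductive version, since it matches the ``we use induction repeatedly'' remark preceding the lemma and sets up the auxiliary identities $a_j(1)=1$, $a_j(2)=j+1$ that are likely to reappear.
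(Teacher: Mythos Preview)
Your proof is correct and essentially the same as the paper's: both induct on $k$, use the same base case, and combine the value $a_j(2)=j+1$ with Pascal's rule in the inductive step. The only cosmetic difference is that the paper writes the recursion as $a_{\ell+1}(3)=a_{\ell+1}(2)+a_\ell(3)$ rather than the full partial sum $a_k(3)=a_{k-1}(1)+a_{k-1}(2)+a_{k-1}(3)$, which amounts to the same computation.
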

\begin{proof}
We prove by induction. 

\textit{Base case:} for $k = 0$, $a_0(3) = F_3 = 2 = \binom{2}{0} + 1$. 

\textit{Inductive hypothesis:} suppose that the formula holds for all $k\le \ell$ for some $\ell\ge 0$. We show that it holds for $k = \ell+1$. We have 
\begin{align*}
    a_{\ell+1}(3) &\ =\ a_{\ell+1}(2) + a_{\ell}(3) & \mbox{ by how we construct Table 1 }\\
    &\ =\ (\ell+2) + \left(\binom{\ell+2}{\ell} + 1\right) & \mbox{ by the inductive hypothesis}\\
    &\ =\ \binom{\ell+2}{\ell+1} + \left(\binom{\ell+2}{\ell} + 1\right)\\
    &\ =\ \binom{\ell+3}{\ell+1} + 1.
\end{align*}
This completes our proof. 
\end{proof}

\begin{proof}[Proof of Theorem \ref{m1}]
We prove by induction on $k$.

\textit{Base case:} for $k = 1$, the identity is $\sum_{m=1}^n a_0(m) = a_0(n+2)-1$, which is true because $\sum_{m=1}^n F_m = F_{n+2} - 1$. 

\textit{Inductive hypothesis:} Suppose that the identity is true for $k \le \ell$ for some $\ell \ge 1$. We show that \begin{align} \label{k1} \sum_{m=1}^n a_{\ell}(m) \ =\ a_\ell(n+2) - \binom{n+\ell+1}{\ell}.\end{align}

We prove \eqref{k1} by induction on $n$. Base case: if $n = 1$, then the left side is $a_{\ell}(1) = 1$, while the right side is $a_\ell(3)- \binom{\ell+2}{\ell} = 1$ by Lemma \ref{fa3}. Inductive hypothesis: suppose that \eqref{k1} holds for $n\le j$ for some $j\ge 1$. We show that it holds for $n = j+1$; that is, $$\sum_{m=1}^{j+1}a_\ell(m) = a_\ell (j+3) - \binom{j+\ell+2}{\ell}.$$ 
We have
\begin{align*}
    \sum_{m=1}^{j+1}a_\ell(m) &\ =\ a_{\ell}(j+1) + \sum_{m=1}^{j} a_\ell(m)\\
    &\ =\ \left(a_{\ell-1}(j+3) - \binom{j+\ell+1}{\ell-1}\right) + \left(a_\ell(j+2) - \binom{j+\ell+1}{\ell}\right)\\
    & \mbox{ by our inductive hypotheses}\\
    &\ =\ (a_{\ell-1}(j+3)+a_\ell(j+2)) - \binom{j+\ell+2}{\ell}\\
    &\ =\ a_{\ell}(j+3) - \binom{j+\ell+2}{\ell}.
\end{align*}
This completes our proof. 
\end{proof}

Next, we prove Theorem \ref{m2}. We will need the following lemma.  

\begin{lem}
Fix $n\ge 1$. We have
$$\#\{S\subset \{1,2, \ldots, n\}\, :\, |S| = k \mbox{ and } \min S \ge k\} \ =\ \binom{n-k+1}{k}.$$
\end{lem}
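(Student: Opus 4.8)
The plan is to reduce the stated count to a single ordinary binomial coefficient by observing that the constraint on $S$ here concerns only its minimum, so no spacing condition between the larger elements is in play.

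First I would note that, since $\min S$ is the least element of $S$, the condition $\min S \ge k$ is equivalent to the statement that \emph{every} element of $S$ is $\ge k$; that is, $S \subseteq \{k, k+1, \ldots, n\}$. Conversely, any $k$-element subset of $\{k, k+1, \ldots, n\}$ automatically has $\min S \ge k$. Hence
$$\#\{S\subseteq\{1,\ldots,n\} : |S| = k \text{ and } \min S \ge k\} \ =\ \#\{S \subseteq \{k, k+1, \ldots, n\} : |S| = k\}.$$

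Second, I would count the right-hand side directly. When $k \le n$, the ground set $\{k, k+1, \ldots, n\}$ has exactly $n - k + 1$ elements, so the number of its $k$-element subsets is $\binom{n-k+1}{k}$, which is the claimed value. (Equivalently, one can make the bijection explicit: $\{s_1 < \cdots < s_k\} \mapsto \{s_1 - (k-1) < \cdots < s_k - (k-1)\}$ is a bijection from the sets being counted onto the $k$-element subsets of $\{1, \ldots, n-k+1\}$, since $s_1 \ge k$ forces $s_1 - (k-1) \ge 1$ and $s_k \le n$ forces $s_k - (k-1) \le n-k+1$.)

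The only point needing care is the degenerate range: if $n - k + 1 < k$ there are no such subsets, and if $n < k$ the ground set $\{k,\ldots,n\}$ is empty, so in either case the left-hand side is $0$, agreeing with $\binom{n-k+1}{k}$ under the usual convention that $\binom{a}{b} = 0$ when $b > a$ or $a < 0$. There is no genuine obstacle here — the whole content is the one-line observation that $\min S \ge k$ is the same as $S \subseteq \{k, k+1, \ldots, n\}$, after which the formula is immediate; I would just be careful to state the edge-case convention so the identity holds for all $k$ and $n \ge 1$.
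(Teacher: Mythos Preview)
Your proof is correct and is essentially the same argument as the paper's: the key observation in both is that $\min S \ge k$ is equivalent to $S \subseteq \{k, k+1, \ldots, n\}$, reducing the count to choosing $k$ elements from an $(n-k+1)$-element set. Your treatment is slightly more detailed (the explicit bijection and the edge-case discussion), but the approach is identical.
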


\begin{proof}
Observe that to form a set $S\subset \{1, 2, \ldots, n\}$ with $|S| = k$ and $\min S\ge k$, we choose $k$ numbers in $\{1, 2, \ldots, n\}$ such that the smallest number is at least $k$. To satisfy these requirements, we need to choose $k$ numbers from $k$ to $n$ inclusive. 
Therefore, the number of ways is $\binom{n-k+1}{k}$. 
\end{proof}

The following corollary is immediate. 
\begin{cor}\label{cs}
Fix $\ell\ge 0$ and $n\ge 1$. We have
$$s_{\ell+1}(n)\ =\ s_{\ell}(n) - \binom{n-\ell+1}{\ell}.$$
\end{cor}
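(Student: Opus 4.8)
The plan is to obtain the identity by directly comparing the families of sets that $s_{\ell}(n)$ and $s_{\ell+1}(n)$ count, and then invoking the Lemma above. For brevity write $\mathcal{S}_k(n)$ for the family $\{S\subseteq\{1,\ldots,n\}:|S|\ge k\text{ and }\min S\ge |S|\}$, so that $s_k(n)=\#\mathcal{S}_k(n)$.

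First I would record the nesting $\mathcal{S}_{\ell+1}(n)\subseteq\mathcal{S}_{\ell}(n)$: if $|S|\ge \ell+1$ and $\min S\ge|S|$, then a fortiori $|S|\ge\ell$ and $\min S\ge|S|$. Consequently $s_{\ell+1}(n)\le s_\ell(n)$ and
$$s_\ell(n)-s_{\ell+1}(n)\ =\ \#\bigl(\mathcal{S}_\ell(n)\setminus\mathcal{S}_{\ell+1}(n)\bigr).$$
Second I would identify the difference set explicitly. A set $S\in\mathcal{S}_\ell(n)$ lies outside $\mathcal{S}_{\ell+1}(n)$ exactly when $|S|<\ell+1$, i.e. $|S|=\ell$; and for such $S$ the Schreier condition $\min S\ge|S|$ reads $\min S\ge\ell$. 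Hence
$$\mathcal{S}_\ell(n)\setminus\mathcal{S}_{\ell+1}(n)\ =\ \{S\subseteq\{1,\ldots,n\}:|S|=\ell\text{ and }\min S\ge\ell\},$$
and the Lemma, applied with $k=\ell$, evaluates the cardinality of the right-hand side as $\binom{n-\ell+1}{\ell}$. Substituting this back gives $s_{\ell+1}(n)=s_\ell(n)-\binom{n-\ell+1}{\ell}$, as claimed.

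Since the combinatorial content is carried entirely by the Lemma, I do not expect a genuine obstacle here; the one point worth a remark is the boundary case $\ell=0$, where $\mathcal{S}_0(n)\setminus\mathcal{S}_1(n)=\{\emptyset\}$ under the usual convention $\min\emptyset=+\infty$. This is consistent with both the Lemma (which gives $\binom{n+1}{0}=1$) and the value $s_0(n)-s_1(n)=1$ visible in Table 2, so the argument above applies uniformly for all $\ell\ge 0$.
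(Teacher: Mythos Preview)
Your argument is correct and is exactly the reasoning the paper has in mind: the corollary is stated as ``immediate'' from the preceding lemma, and what you have written is precisely that immediate deduction, namely that $\mathcal{S}_\ell(n)\setminus\mathcal{S}_{\ell+1}(n)$ consists of the Schreier sets of size exactly $\ell$, whose count is given by the lemma with $k=\ell$. Your treatment of the boundary case $\ell=0$ is also consistent with the paper's conventions.
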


\begin{proof}[Proof of Theorem \ref{m2}] We prove by induction on $k$.

\textit{Base case:} for $k = 0$, we want to show that $s_0(n) = a_0(n+2): = F_{n+2}$. This follows from \cite[Theorem 1]{Ch}.

\textit{Inductive hypothesis:} Suppose that the formula holds for $k\le \ell$ for some $\ell\ge 0$. We want to show that 
\begin{align}\label{k2}
    s_{\ell+1}(n) \ =\ a_{\ell+1}(n-2\ell).
\end{align}

For $n\le 2\ell$, the right side of \eqref{k2} is $0$ by convention. The left side counts the number of subsets $S\subset \{1, 2, \ldots, n\}$ such that $|S|\ge \ell+1$ and $\min S\ge |S|\ge  \ell+1$. Because $|S|\ge \ell+1 > n/2$, we know that $S$ must contain a number smaller than or equal to $n/2\le \ell$. Hence, $\min S\le \ell$, which contradicts that $\min S\ge \ell+1$. Therefore, the left side is also $0$. 

For $n> 2\ell$, we have $$a_{\ell+1}(n-2\ell) = a_\ell(n-2\ell+2) - \binom{n-\ell+1}{\ell}$$ by Theorem \ref{m1}. On the other hand, we have
$$s_{\ell+1}(n) = s_{\ell}(n)-\binom{n-\ell+1}{\ell}$$ by Corollary \ref{cs}. By the inductive hypothesis, $s_{\ell}(n) = a_\ell(n-2\ell+2)$; therefore, we have
$s_{\ell+1}(n) \ =\ a_{\ell+1}(n-2\ell)$. This completes our proof. 
\end{proof}



\newcommand{\etalchar}[1]{$^{#1}$}

\ \\

\noindent MSC2010: 11B39

\end{document}